\documentclass{amsart}
\usepackage{amsmath, amssymb}
\usepackage{xcolor}
\vfuzz2pt 
\hfuzz2pt 
\newtheorem{Theorem}{Theorem}[section]

\newtheorem{Lemma}[Theorem]{Lemma}
\newtheorem{Proposition}[Theorem]{Proposition}
\theoremstyle{definition}
\newtheorem{Definition}[Theorem]{Definition}
\theoremstyle{remark}
\newtheorem{rem}[Theorem]{Remark}

\numberwithin{equation}{section}
\newcommand{\norm}[1]{\left\Vert\right\Vert}
\newcommand{\abs}[1]{\left\vert\right\vert}
\newcommand{\set}[1]{\left\{\right\}}
\newcommand{\R}{\mathbb R}
\newcommand{\N}{\mathbb N}
\newcommand{\Z}{\mathbb Z}

\newcommand{\C}{\mathbb C}
\newcommand{\F}{\mathcal{F}}

\newcommand{\res}{\hbox{\rm res}}
\newcommand{\tr}{\hbox{\rm tr}}

\begin{document}

\title[Closed cocycles on PDOs]{On a class of closed cocycles for algebras of non-formal, possibly unbounded, pseudodifferential operators}%
\author{Jean-Pierre Magnot}%
\address{LAREMA - UMR CNRS 6093 \\ Universit\'e d'Angers \\ 2 Boulevard Lavoisier 
	49045 Angers cedex 01 \\ and \\ Lyc\'ee Jeanne dArc \\ 30 avenue de Grande Bretagne\\
	F-63000 Clermont-Ferrand \\
	 http://orcid.org/0000-0002-3959-3443}%
\email{jean-pierr.magnot@ac-clermont.fr}%



\begin{abstract}
In this article, we consider algebras $\mathcal{A}$ of non-formal pseudodifferential operators over $S^1$ which contain $C^\infty(S^1),$ understood as multiplication operators. We apply a construction of Chern-Weil type forms in order to get $2k-$closed cocycles. For $k=1,$ we obtain a cocycle on the algebra of (maybe non classical) pseudodifferential operators with the same cohomology class as the Schwinger cocycle on the algebra of Classical pseudodifferential operators, previously extended and studied by the author on algebras of the same type. 
\end{abstract}

\maketitle
\noindent
{\small MSC (2020) : 17B56, 22E66, 47G30, 58B25, 58J40}

\noindent
{\small Keywords : pseudodifferential operators, cocycle, renormalized traces}

\section*{Introduction}
We present here a construction of a family of cocycles on the Lie algebra of maybe unbounded, maybe non classical, non formal pseudodifferential opeartors $PDO(S^1,V).$ Cocycles on algebras of pseudodifferential operators have been studied from the viewpoint of algebras of formal symbols, see e.g. \cite{KW,KK} for cocycles related to our study, or algebras of non-formal but classical pseudo-differential operators, see e.g. \cite{Rad}. In the last study, the notion of renormalized trace plays an important role, as well as in e.g. the works \cite{CDMP,CDP,Mick} where these cocycles are shown to be linked with anomalies in physics via  differential geometric considerations. 

In our works \cite{Ma2003,Ma2006-2,Ma2008}, we made more precise the link between various aspects: 
\begin{itemize}
	\item the Kravchenko-Khesin cocycle \cite{KK} on formal pseudodifferential operators over $S^1$
	\item the index cocycle on the restricted linear group $GL_{res}$ defined in \cite{PS}
	\item the approach by Radul \cite{Rad}
	\item the Schwinger cocycle \cite{Sch}, see e.g. \cite{CDMP,Mick},
\end{itemize}   
and we showed \cite{Ma2008} that the Schwinger cocycle and the index cocycle could be extended to the algebra $PDO(S^1,V)$ of maybe unbounded, maybe on classical, non formal pseudodifferential operators over $S^1.$ 

We come back to this program in the present work by adding a new idea to our investigations: integrate the classical formulas for Chern-Weil forms $\tr \Omega^k$ in order to define (closed) $2k-$cocycles on $PDO(S^1,V).$ For this task, the classical Bianchi identity is obtained algebraically on the Lie algebra $PDO(S^1,V)$ and the connection that we consider is with values in smoothing operators. 
This enables us to consider $\tr$ as the classical trace of trace class operators, even if technical steps of our investigations require zeta-renormalized traces along the lines of \cite{PayBook,Scott}. In this framework, classical computations of Chern-Weil forms apply and we show that $\tr \Omega^k$ is a $2k-$cocycle on $PDO(S^1,V)$ for $k \in \N^*.$ 

For $k=1,$ even if our connection gives a slightly different $2-$cocycle $\tr \Omega$ from the one studied in \cite{Ma2008}, we show that this $2-$cocycle related to the cohomology class of the Schwinger cocycle. We have to remark that we are here able to prove that these two cocycles have the same cohomology class (up to a factor) on the algebra of \textit{classical} pseudodifferential operators, but we have no information of this kind for $PDO(S^1,V)$ (except that  $\tr \Omega$  is not a coboundary), except that it is not a coboundary.


\section{Preliminaries}
{{} There are multiple frameworks that call, under the name pseudodifferential operator, various objects which may not be (true) operators acting on sections of a finite dimensional complex vector bundle. These last ones are called \textit{formal}, while the others are called \textit{non formal.} Moreover, there exists restricted classes of pseudo-differential operators: \textit{classical,} \textit{log-polyhomogeneous} among others. For the sake of clarity and for a comprehensive exposition, we start by a non-technical presentation of the operators that we consider, and a more rigorous description  will follow. }

\subsection{A panoramic overview on pseudodifferential operators}

We specialize below  to the trivial complex vector bundle $S^1 \times V$ in which $V$ is a $d-$dimensional complex vector space. 
The following definition appears in \cite[Section 2.1]{BGV}.

\begin{Definition} \label{def:diff-op}
	The graded algebra of differential operators acting on the space of smooth sections $C^\infty(S^1,V)$ is the 
	algebra $DO(S^1,V)$ generated by:
	
	$\bullet$ Elements of $C^\infty(S^1,M_d(\C))$
	
	$\bullet$  {Covariant derivation} operators
	$$\nabla_X : g \in C^\infty(S^1,E) \mapsto \nabla_X g$$ where $\nabla$       		is a {smooth } connection on $E$ and $X$ is a {smooth} vector field on $S^1$.
\end{Definition}

{We assign the order $0$  to smooth function} multiplication operators. 
{The derivation} operators have the  
order 1.
We denote by $DO^k(S^1,V)$,$k \geq 0$, the differential operators of order less or equal than $k$.
The algebra $DO(S^1,V)$ is {filtered  by the} order. It is a subalgebra of the algebra of classical pseudo-differential operators $Cl(S^1,V)$ that we describe shortly hereafter, focusing on its necessary aspects.
This is an algebra that contains, for example, the square root of the Laplacian \begin{equation} \label{eq:integral}|D| = \Delta^{1/2} = \int_{\Gamma} \lambda^{1/2}(\Delta-\lambda Id)^{-1} d\lambda,\end{equation}
where $\Delta = - \frac{d^2}{dx^2}$ is the positive Laplacian and $\Gamma$ is a contour around the spectrum of the Laplacian, see e.g. \cite{See,PayBook} for an exposition on contour integrals of pseudodifferential operators. $Cl(S^1,V)$ contains also the inverse of $Id+\Delta,$ and all
smoothing operators on $L^2(S^1,V). $ 

 {{} Wider classes of pseudo-differential operators can be also considered. An example of frequent use remains on the real powers of the Laplacian $\Delta^{\alpha},$ where $\alpha \in \R,$ defined through contour integrals like $\Delta^{1/2}$ already mentioned. One can define the same way $\log \Delta.$ } Pseudodifferential operators (maybe non-classical) are linear operators acting on $C^\infty(S^1,V)$ which reads locally as
$$ A(f) = \int e^{ix.\xi}\sigma(x,\xi) \hat{f}(\xi) d\xi$$ where $\sigma \in C^\infty(T^*S^1, M_n(\C))$ satisfying additional estimates on its partial derivatives {{} which will be given in a detailed form next section} {and $\hat{f}$ means the Fourier transform of $f$}.

{{} Another special class of pseudo-differential operators is also of great interest. This is the set of smoothnig pseudo-differential operators. They are equivalently: 
	\begin{itemize}
		\item classical pseudo-differential operators with order $-\infty,$ that is, they are in the set of classical pseudo-differential operators $k$ for any $k \in \Z.$
		\item pseudodifferential operators defined on $L^2(S^1,V)$ with values in $C^\infty(S^1,V),$ which explains the terminology.
		\item operators with a smooth kernel $K \in C^\infty(S^1 \times S^1, M_n(\C)).$ 
	\end{itemize}
	These operators from an ideal in any algebra of pseudo-differential operators. 
	Quotienting by this ideal, we obtain  algebras of \emph{formal} pseudo-differential operators.}

\subsection{Rigorous approach to pseudodifferential operators}

 Basic facts on pseudo-differential operators defined 
on a vector bundle $E \rightarrow S^1$ can be found e.g. in \cite{Gil}.
{{}
	Let us now recall them in a rigorous exposition, as brief as possible, concentrating our exposition on pseudo-differential operators over $S^1$ which is equipped with an atlas in which changes of coordinates are affine maps. This is made possible through the structure of abelian group of $S^1,$ where the atlas considered is the atlas obtained through the exponential map. 
	We set $S^1 = \left\{z \in {\mathbb{C}} \, | \, |z| = 1\right\}$. We shall use for convenience the smooth atlas 
	$\mathcal{A}$ of $S^1$ defined as follows:
	\begin{eqnarray*}
		\mathcal{A} & = & \{\varphi_0,\varphi_1\} ; \\
		\varphi_n & : & x \in ]0 ; 2\pi[ \mapsto e^{i(x + n\pi)} \subset S^1 
		\hbox{ for } n\in \{0;1\} \end{eqnarray*}     
	Associated to this atlas, we fix a smooth partition of the unit $\{s_0;s_1\}$.
	We identify each of these functions with its associated multiplication operator when 
	necessary.
	An operator $A : C^\infty(S^1,\mathbb{C}) \rightarrow C^\infty(S^1,\mathbb{C})$ can be described in 
	terms of 4 operators
	$$ A_{m,n} : f \mapsto s_m \circ A \circ s_n \hbox{ for } (m,n) \in \{0,1\}^2.$$ 
	A scalar
	pseudo-differential operator  of order $o$ 
	is an operator $$ A : C^\infty(S^1, \mathbb{C}) \rightarrow C^\infty(S^1,\mathbb{C})$$
	such that, $\forall (m,n) \in \{0,1\}^2,$
	$$ A_{m,n}(f) = \int_{]0 ; 2\pi[} e^{-ix\xi}\sigma_{m,n}(x,\xi) \hat{(s_n.  
		f)} (\xi) d\xi$$
	where $\sigma_{m,n} \in C^\infty( ]0 ; 2\pi[ \times \R, \mathbb{C})$ satisfies
	$$\forall (\alpha, \beta) \in \N^2, \quad |D^\alpha_x D^\beta_\xi \sigma_{m,n}(x,\xi)
	|\leq C_{\alpha,\beta}(1 + |\xi|)^{o-\beta}.$$
}
 We note by $PDO(S^1,V)$ the space of maybe non classical, maybe unbounded, pseudodifferential operators acting on $C^\infty(S^1,V).$A pseudo-differential operator of order $o$ is called 
\textbf{classical} if and only if its symbols 
$\sigma$ have an asymptotic expansion
$ \sigma(x,\xi) \sim_{|\xi| \rightarrow +\infty} \sum_{j=-\infty}^o \sigma_j(x,\xi),$
where the maps $\sigma_j : S^1 \times \R^* \rightarrow \mathbb{C}$, 
called \textbf{partial symbols},
are j-positively 
homogeneous, i.e. {{} $$\forall t>0, (x,\xi) \in S^1 \times \R^*, 
(\sigma)_j(x,t\xi) = t^j (\sigma)_j(x,\xi).$$

The class of log-polyhomogeneous pseudo-differential operators can be also described this way. These are operators in $Cl(S^1,V)[\log(\Delta)],$ which inherits also a second degree,  from the evaluation of $Cl(S^1,V)-$polynomials at $\log(\Delta).$}

{Pseudodifferential operators can be also described by their kernel $$K(x,y) = \int_{\R} e^{i(x-y)\xi} \sigma(x,\xi)d\xi$$ which is off-diagonal smooth.} Pseudodifferential operators {with infinitely smooth kernel (or "smoothing" operators).} 

 The quotient 
$$\F Cl(S^1,V)=Cl(S^1,V)/PDO^{-\infty}(S^1,V)$$ of the algebra of pseudo-differential operators by $PDO^{-\infty}(S^1,V)$ forms the algebra of formal {{} classical} pseudo-differential operators. 

\begin{rem}
	Through identification of $\F Cl(S^1,V)$ with the corresponding space of formal symbols, the space $\F Cl(S^1,V)$ is equipped with the natural locally convex topology inherited from the space of formal symbols. 
	A formal symbol $\sigma_k$ is a smooth function in  $C^\infty(T^*S^1\setminus S^1, M_n(\C))$ which is $k-$homogeneous (for $k>0)$), and hence with an element of $C^\infty(S^1, M_n(\C))^2$ evaluating 
	$\sigma_k$ at $\xi = 1$ and $\xi = -1,$ see \cite{Ma2006-2,MRu2021-1}. Identifyting $\F Cl^d(S^1,V)$ with 
	$$ \prod_{k \leq d} C^\infty(S^1, M_n(\C))^2, $$ 
	the vector space $\F Cl^d(S^1,V)$ is a Fr\'echet space, 
	and hence $$\F Cl(S^1,V) = \cup_{d \in \Z}\F Cl^d(S^1,V)$$ 
	is a locally convex topological algebra. 
	

  \end{rem}
	\subsection{The splitting with induced by the connected components of $T^*S^1\setminus S^1.$} \label{ss:+-}
In this section, we define two ideals of the algebra $\mathcal{F}Cl(S^1,V)$, 
that we call $\mathcal{F}Cl_+(S^1,V)$ and $\mathcal{F}Cl_-(S^1,V)$, such that $\mathcal{F}Cl(S^1,V) = \mathcal{F}Cl_+(S^1,V) \oplus \mathcal{F}Cl_-(S^1,V)$. 
This decomposition is explicit in \cite[section 4.4., p. 216]{Ka}, and we give an explicit description here following \cite{Ma2003,Ma2006-2}. 

\begin{Definition}
	
	Let $\sigma$ be a partial symbol of order $o$ on $E$. Then, we define, for $(x,\xi) \in T^*S^1\setminus S^1$, 
	$$ \sigma_+(x,\xi) = \left\{ 
	\begin{array}{ll}
		\sigma(x,\xi) & \hbox{ if $ \xi > 0$} \\
		0 & \hbox{ if $ \xi < 0$} \\
	\end{array}
	\right. \hbox{ and }
	\sigma_-(x,\xi) = \left\{ 
	\begin{array}{ll}
		0 & \hbox{ if $ \xi > 0$} \\
		\sigma(x,\xi) & \hbox{ if $ \xi < 0$} . \\
	\end{array}
	\right.$$
	We define  $p_+(\sigma) = \sigma_+$ and $p_-(\sigma) = \sigma_-$ . 
\end{Definition}
The maps 
$ p_+ : \mathcal{F}Cl(S^1,V) \rightarrow \mathcal{F}Cl(S^1,V) $ { and } $p_- : \mathcal{F}Cl(S^1,V) \rightarrow \mathcal{F}Cl(S^1,V)$ are clearly smooth algebra morphisms (yet non-unital morphisms) that leave the order invariant and are also projections (since multiplication on formal symbols is expressed in terms of pointwise multiplication of tensors). 

\begin{Definition} We define
	$  \mathcal{F}Cl_+(S^1,V) = Im(p_+) = Ker(p_-)$
	and $  \mathcal{F}Cl_-(S^1,V) = Im(p_-) = Ker(p_+).$ \end{Definition}
Since $p_+$ is a projection,  we have the splitting
$$ \mathcal{F}Cl(S^1,V) = \mathcal{F}Cl_+(S^1,V) \oplus \mathcal{F}Cl_-(S^1,V) .$$
Let us give another characterization of $p_+$ and $p_-$. {The operator $D = {-i} \frac{d}{dx}$ splits $C^\infty(S^1, \C^n)$ into three spaces :
	
	- its kernel $E_0,$ built of constant maps
	
	- $E_+$, the vector space spanned by eigenvectors related to positive eigenvalues
	
	- $E_-$, the vector space spanned by eigenvectors related to negative eigenvalues.
	
	The $L^2-$orthogonal projection on $E_0$ is a smoothing operator, which has null formal symbol. By the way, concentrating our attention on thr formal symbol of operators, we can ignore this projection and hence we work on $E_+ \oplus E_-$. When dealing with non-formal operators, we shall set $p_+ = p_{E_+}.$ The following elementary result will be useful for the sequel.  
	\begin{Lemma} \label{l1} \cite{Ma2003,Ma2006-2}
		
		
		
		
		Let $p_{E_+}$ (resp. $p_{E_-}$) be the projection on $E_+$ (resp. $E_-$), then 
		$\sigma(p_{E_+}) ={1 \over 2}(Id + {\xi \over |\xi|})$ and $\sigma(p_{E_-}) = {1 \over 2}(Id - {\xi \over |\xi|})$.
	\end{Lemma}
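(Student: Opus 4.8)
The plan is to compute the (discrete) symbol of each projection directly from the eigenbasis of $D = -i\frac{d}{dx}$ and then to recognize the resulting $0$-homogeneous expressions. First I would diagonalize $D$ on $C^\infty(S^1,\C^n)$: with respect to the Fourier basis $e_k(x) = e^{ikx}$, $k\in\Z$ (acting diagonally on each of the $n$ components), one has $D e_k = -i(ik)\,e_k = k\,e_k$, so that the eigenvalue attached to $e_k$ is exactly the integer $k$. Consequently $E_+$ is the closed span of the $e_k$ with $k>0$, $E_-$ the closed span of those with $k<0$, and the kernel $E_0$ is spanned by $e_0$, i.e.\ the constant maps.

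Next I would write the orthogonal projections as Fourier multipliers. For $f=\sum_k \hat f(k)\,e_k$ one has $p_{E_+}f = \sum_{k>0}\hat f(k)\,e_k$ and $p_{E_-}f = \sum_{k<0}\hat f(k)\,e_k$; that is, $p_{E_+}$ (resp.\ $p_{E_-}$) multiplies the $k$-th Fourier coefficient by the indicator of $\{k>0\}$ (resp.\ $\{k<0\}$). Recalling that on $S^1$ a pseudodifferential operator acts by $Af(x) = \sum_k e^{ikx}\,\sigma(x,k)\,\hat f(k)$, a Fourier multiplier $m(k)$ has symbol $\sigma(x,\xi)=m(\xi)$, independent of $x$. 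Reading off the multipliers, $\sigma(p_{E_+})(x,\xi)$ equals $Id$ for $\xi>0$ and $0$ for $\xi<0$, while $\sigma(p_{E_-})(x,\xi)$ equals $0$ for $\xi>0$ and $Id$ for $\xi<0$, both regarded as $0$-homogeneous matrix-valued functions on $T^*S^1\setminus S^1 = \{\xi\neq 0\}$.

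Finally I would match these step functions to the claimed closed forms by testing the two signs of $\xi$: for $\xi>0$, $\frac{\xi}{|\xi|}=1$ gives $\frac12(Id+\frac{\xi}{|\xi|})=Id$ and $\frac12(Id-\frac{\xi}{|\xi|})=0$; for $\xi<0$, $\frac{\xi}{|\xi|}=-1$ interchanges the two values. This is exactly $\sigma(p_{E_+})$ and $\sigma(p_{E_-})$, which proves the lemma; note in passing that since each symbol coincides with its single $0$-homogeneous term, both projections are genuinely classical of order $0$.

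The content here is essentially a direct computation, so there is no substantial obstacle; the only point requiring care is the zero mode $E_0$. The symbol is undefined at $\xi=0$ and the discrete multiplier is never evaluated there, while the $L^2$-orthogonal projection onto $E_0$ is finite rank, hence smoothing, with null formal symbol. Thus ignoring $E_0$, as in the discussion preceding the statement, is harmless: the $0$-homogeneous extension of the multiplier from $\Z\setminus\{0\}$ to $\R\setminus\{0\}$ is insensitive to the value at the origin, and the formulas for $\sigma(p_{E_\pm})$ are well defined on all of $T^*S^1\setminus S^1$.
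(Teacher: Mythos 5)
Your proof is correct. The paper itself gives no argument for this lemma (it is quoted from \cite{Ma2003,Ma2006-2}), and your computation --- diagonalizing $D=-i\frac{d}{dx}$ in the Fourier basis, identifying $p_{E_\pm}$ as the multipliers by the indicators of $\{k>0\}$ and $\{k<0\}$, and matching these with the $0$-homogeneous functions $\frac12\bigl(Id\pm\frac{\xi}{|\xi|}\bigr)$ on $\xi\neq 0$ --- is exactly the standard derivation, with the zero-mode $E_0$ correctly dismissed as a finite-rank, hence smoothing, correction.
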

	


From this, we have the following result.

\begin{Proposition} \label{pag} \cite{Ma2003,Ma2006-2}
	Let $A \in \mathcal{F}Cl(S^1,V).$ 
	$ p_+(A) =  \sigma( p_{E_+}) \circ A = A \circ \sigma( p_{E_+})$ and 
	$  p_-(A) =  \sigma( p_{E_-}) \circ A = A \circ \sigma( p_{E_-}).$
\end{Proposition}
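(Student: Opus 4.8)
The plan is to combine the standard composition formula for formal (classical) symbols with the explicit shape of $\sigma(p_{E_+})$ and $\sigma(p_{E_-})$ furnished by Lemma \ref{l1}. Writing $\pi_+ = \sigma(p_{E_+}) = \frac{1}{2}\bigl(Id + \frac{\xi}{|\xi|}\bigr)$, the decisive observation is that $\pi_+$ depends only on $\xi$ (not on $x$) and, on the domain $T^*S^1 \setminus S^1$ where $\xi \neq 0$, it is \emph{locally constant}: it equals $Id$ on the component $\xi > 0$ and $0$ on the component $\xi < 0$. The symbol $\pi_- = \sigma(p_{E_-})$ enjoys the same property with the two components exchanged.

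First I would recall that the formal symbol of a composition $A \circ B$ is given by
$$\sigma(A \circ B)(x,\xi) \sim \sum_{\alpha \geq 0} \frac{1}{\alpha!}\, \partial_\xi^\alpha \sigma(A)(x,\xi)\; D_x^\alpha \sigma(B)(x,\xi), \qquad D_x = -i\partial_x.$$
Applying this to $\pi_+ \circ A$, so that $\pi_+$ is the outer symbol, every term with $|\alpha| \geq 1$ carries a factor $\partial_\xi^\alpha \pi_+$, which vanishes identically because $\pi_+$ is constant on each connected component of $\{\xi \neq 0\}$. Hence the expansion collapses to its leading term and $\sigma(\pi_+ \circ A) = \pi_+\, \sigma(A)$, which equals $\sigma(A)$ for $\xi > 0$ and $0$ for $\xi < 0$; this is exactly $p_+(A)$ by the defining formula for $p_+$.

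For the composition in the opposite order $A \circ \pi_+$, the outer symbol is now $\sigma(A)$ and the inner symbol is $\pi_+$, so the terms with $|\alpha| \geq 1$ carry a factor $D_x^\alpha \pi_+$, which again vanishes identically since $\pi_+$ does not depend on $x$. The expansion therefore collapses to $\sigma(A \circ \pi_+) = \sigma(A)\,\pi_+$. As $\pi_+$ takes only the central values $0$ and $Id$ at each point, left and right multiplication by $\pi_+$ coincide, so $\sigma(A)\,\pi_+ = \pi_+\,\sigma(A) = p_+(A)$. The two assertions for $p_-$ follow verbatim upon replacing $\pi_+$ by $\pi_-$.

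The only point requiring genuine care, and the closest thing to an obstacle, is to justify that the asymptotic composition formula truncates \emph{exactly} rather than merely modulo terms of lower order: here this is automatic, since the correction terms are not just subleading but identically zero, a direct consequence of $\pi_\pm$ being locally constant on $T^*S^1 \setminus S^1$. I would also flag the matrix-valued subtlety in the second computation, where a priori $\sigma(A)\,\pi_+ \neq \pi_+\,\sigma(A)$; this is harmless precisely because $\pi_\pm \in \{0, Id\}$ are scalar multiples of the identity at each point and hence commute with $\sigma(A)$.
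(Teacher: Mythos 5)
Your proposal is correct and is essentially the intended argument: the paper itself gives no proof of this proposition, merely citing \cite{Ma2003,Ma2006-2}, and the standard proof there is exactly yours --- since $\sigma(p_{E_\pm})$ is independent of $x$ and locally constant on $T^*S^1\setminus S^1$, the symbol composition expansion truncates exactly to pointwise multiplication by a pointwise-central idempotent, which is precisely $p_\pm$. Your two flagged caveats (exact truncation rather than truncation modulo lower order, and centrality of the matrix-valued symbol $\tfrac12\bigl(Id\pm\tfrac{\xi}{|\xi|}\bigr)$) are the right ones and are handled correctly.
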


\noindent
{\textbf{Notation.} For shorter notations, we note by $A_\pm = p_\pm(A)$ the formal operators defined from another viewpoint by $$\sigma (A_+)(x,\xi) \quad (\hbox{ resp. }\sigma (A_-)(x,\xi) ) = \left\{ \begin{array}{ll}
		\sigma (A)(x,\xi) & \hbox{if }\xi > 0 \quad(\hbox{ resp. }\xi < 0 ) \\
		0 & \hbox{if }\xi < 0  \quad(\hbox{ resp. }\xi > 0 )\\
	\end{array} \right.$$
	We now turn to maybe non classical pseudo-differentail operators, along the lines of \cite{Ma2008}:
	\begin{Proposition} \label{bracksmooth}
		For any $A \in PDO(S^1,V)$, $$[A,p_{E_+}] \in PDO^{-\infty}(S^1,V).$$
	\end{Proposition}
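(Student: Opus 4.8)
The plan is to show that $[A, p_{E_+}]$ is smoothing by passing to symbols. The key observation, supplied by Lemma~\ref{l1}, is that $p_{E_+}$ has formal symbol $\tfrac{1}{2}(Id + \xi/|\xi|)$, which is constant (in fact equal to $Id$) on the connected component $\{\xi > 0\}$ of $T^*S^1\setminus S^1$ and constant (equal to $0$) on $\{\xi < 0\}$. In particular, on each connected component the symbol of $p_{E_+}$ has all $\xi$-derivatives and all $x$-derivatives equal to zero. The heart of the argument is therefore the symbol product formula: for two pseudodifferential operators with symbols $\sigma(A)$ and $\sigma(B)$, the composition has symbol given by the asymptotic expansion
\begin{equation}\label{eq:composition}
\sigma(A\circ B) \sim_{|\xi|\to\infty} \sum_{\alpha \geq 0} \frac{1}{\alpha!}\, \partial_\xi^\alpha \sigma(A)\, D_x^\alpha \sigma(B).
\end{equation}

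First I would write $\sigma([A,p_{E_+}]) = \sigma(A\circ p_{E_+}) - \sigma(p_{E_+}\circ A)$ and expand each term using \eqref{eq:composition}. For the product $A\circ p_{E_+}$, the factors $D_x^\alpha \sigma(p_{E_+})$ vanish identically for $\alpha \geq 1$ away from $\xi = 0$, so only the $\alpha = 0$ term $\sigma(A)\,\sigma(p_{E_+})$ survives modulo a smoothing remainder; similarly, for $p_{E_+}\circ A$ the factors $\partial_\xi^\alpha \sigma(p_{E_+})$ vanish for $\alpha \geq 1$ away from $\xi = 0$, leaving only $\sigma(p_{E_+})\,\sigma(A)$. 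Since $\sigma(p_{E_+})$ is a scalar multiple of the identity on each component (namely $Id$ on $\{\xi>0\}$ and $0$ on $\{\xi<0\}$), the two leading terms $\sigma(A)\sigma(p_{E_+})$ and $\sigma(p_{E_+})\sigma(A)$ coincide on each component and cancel. Hence the formal symbol of $[A,p_{E_+}]$ vanishes, which means $[A,p_{E_+}] \in PDO^{-\infty}(S^1,V)$.

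The point requiring care, and the main obstacle, is the behaviour at $\xi = 0$: the symbol $\xi/|\xi|$ is discontinuous there, so the formal-symbol manipulations are only legitimate on $T^*S^1\setminus S^1$, i.e. away from the zero section. This is precisely why the statement is phrased modulo $PDO^{-\infty}$: the low-frequency part of the operators, which is insensitive to the asymptotic symbol expansion, contributes only a smoothing operator. Concretely, I would fix a smooth cutoff that equals $1$ for $|\xi|$ large and split $p_{E_+}$ into its genuine operator and a smoothing correction supported near $\xi = 0$ (this is exactly the projection onto $E_0$ and the smoothing ambiguity noted before Lemma~\ref{l1}); the commutator of $A$ with the smoothing part is automatically smoothing because $PDO^{-\infty}(S^1,V)$ is a two-sided ideal, as recorded in the Preliminaries. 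The remaining commutator is handled by the symbol computation above. I would therefore expect the only genuinely delicate step to be verifying that the asymptotic expansion \eqref{eq:composition} applies to the possibly non-classical, possibly unbounded operators $A \in PDO(S^1,V)$ rather than only to classical ones; but since \eqref{eq:composition} is the general composition formula for pseudodifferential symbols and does not use classicality, this causes no real difficulty.
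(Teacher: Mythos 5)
The paper gives no proof of Proposition \ref{bracksmooth}: it is stated as imported from \cite{Ma2008}, so there is no in-text argument to compare yours against. Your symbol-calculus proof is correct and is the standard route to this fact. The two points that carry the argument are exactly the ones you isolate: (i) after replacing $\tfrac{1}{2}(Id+\xi/|\xi|)$ by a smooth multiplier agreeing with it for $|\xi|$ large (the discrepancy, supported near $\xi=0$, is a smoothing operator and is absorbed by the two-sided ideal property of $PDO^{-\infty}(S^1,V)$), every $\xi$- or $x$-derivative of $\sigma(p_{E_+})$ is compactly supported in $\xi$, so each term of the composition expansion with $\alpha\geq 1$ has order $-\infty$; and (ii) the surviving zeroth-order terms $\sigma(A)\sigma(p_{E_+})$ and $\sigma(p_{E_+})\sigma(A)$ cancel because $\sigma(p_{E_+})$ is a scalar multiple of $Id_V$, hence central in $M_d(\C)$ pointwise. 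One precision worth making explicit: to conclude that $[A,p_{E_+}]$ is smoothing, and not merely that its asymptotic expansion vanishes, you need the remainder estimate in the composition formula --- the difference between $\sigma(A\circ B)$ and the partial sum over $\alpha<N$ is a symbol of order $ord(A)+ord(B)-N$ --- which holds for general H\"ormander-class symbols of finite order, classical or not (and the paper does treat every element of $PDO(S^1,V)$ as having a finite order, cf.\ the proof of Proposition \ref{ncltrace}). This is what upgrades ``all terms vanish for large $|\xi|$'' to ``the commutator has order $-N$ for every $N$, hence is smoothing''. With that remark your argument is complete.
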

	\subsection{Renormalized traces of classical pseudodifferential operator}
	$S^1\times V$ is equiped this an Hermitian products $<.,.>$,
	which induces the following $L^2$-inner product on $C^\infty(S^1,V):$ 
	$$ \forall u,v \in C^\infty(S^1,V), \quad (u,v)_{L^2} = \int_{S^1} <u(x),v(x)> dx, $$
	where $dx$ is the Riemannian volume. 
	\begin{Definition}
		$Q$ is a \textbf{weight} of order $s>0$ on $E$ if and only if $Q$ is a classical, elliptic,
		self-adjoint, positive pseudo-differential operator acting on 
		smooth sections of $E$.
	\end{Definition}
	Recall that, under these assumptions, the weight $Q$ has a real discrete spectrum, and that 
	all its eigenspaces are finite dimensional. 
	For such a weight $Q$ of order $q$, one can define the complex 
	powers of $Q$ \cite{See}, 
	see e.g. \cite{CDMP} for a fast overview of technicalities. 
	The powers $Q^{-s}$ of the weight $Q$ 
	are defined for $Re(s) > 0$ using with a contour integral,
	$$ Q^{-s} = \int_\Gamma \lambda^s(Q- \lambda Id)^{-1} d\lambda,$$
	where $\Gamma$ is a contour around the real positive axis.
	Let $A$ be a log-polyhomogeneous pseudo-differential operator.  The map
	$\zeta(A,Q,s) = s\in {\mathbb{C}} \mapsto \hbox{tr} \left( AQ^{-s} \right)\in {\mathbb{C}}$ , 
	defined for $Re(s)$ large, extends
	on ${\mathbb{C}}$ to a meromorphic function \cite{Le}. When $A$ is classical, 
	$\zeta(A,Q,.)$ has a simple pole at $0$  
	with residue ${1 \over q} \res_W A$, where $\res_W$ is the Wodzicki
	residue (\cite{W}, see also \cite{Ka}). Notice that the Wodzicki residue extends the Adler trace \cite{Adl} on formal symbols.
	Following textbooks \cite{PayBook,Scott} for the renormalized trace of 
	classical operators, we define
	
	\begin{Definition} \label{d6} Let $A$ be a log-polyhomogeneous pseudo-differential operator. 
		The finite part of $\zeta(A,Q,s)$ at $s = 0$ is called the renormalized trace
		$\tr^Q A$. If $A$ is a classical pseudo-differential operator,
		$$\tr^Q A = lim_{s \rightarrow 0} (\hbox{tr} (AQ^{-s})
		- {1 \over qs} \res_W (A).$$
	\end{Definition}

	If $A$ is trace class acting on $L^2(S^1,{\mathbb{C}}^k)$, 
	$\hbox{tr}^Q{(A)}=\hbox{tr}{(A)}$.
	The functional $\hbox{tr}^Q$ is of course not a trace.
	In this formula, it appears that the Wodzicki residue $\res_W(A).$ 
	
	\begin{Proposition} \label{p5}
		
		\begin{item}
			
			(i) The Wodzicki residue $\res_W$ is a trace on the algebra of
			classical pseudo-differential operators $Cl(S^1,E)$, i.e. $\forall
			A,B \in Cl(S^1,V), \res_W[A,B]=0.$
			
		\end{item}

		\begin{item}
			
			(ii) (local formula for the Wodzicki residue) Moreover, if $A \in Cl(S^1,V)$,
			$$ \res_W A = {1 \over 2\pi} \int_{S^1} \int_{|\xi|=1} tr \sigma_{-1}(x,\xi) d\xi dx = {1 \over 2\pi} \sum_{\xi = \pm 1} \int_{S^1}  tr \sigma_{-1}(x,\xi) d\xi dx. $$
			In particular, $\res_W$ does not depend on the choice
			of $Q$.
		\end{item}

		

	\end{Proposition}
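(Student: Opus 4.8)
The plan is to establish the local formula (ii) first, since the $Q$-independence of $\res_W$ is then an immediate corollary---the right-hand side of the formula involves only the symbol of $A$ and not the weight $Q$---and since the trace property (i) is most transparently read off from the local expression.

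For (ii), I would start from the characterization $\res_W A = q\,\mathrm{Res}_{s=0}\,\zeta(A,Q,s)$ recorded above and compute this residue explicitly. For $\mathrm{Re}(s)$ large, $AQ^{-s}$ is trace class and I would write $\tr(AQ^{-s}) = \frac{1}{2\pi}\int_{S^1}\int_{\R}\tr\,\sigma(AQ^{-s})(x,\xi)\,d\xi\,dx$, using the symbol of the operator to compute the kernel on the diagonal. The symbol calculus gives $\sigma(AQ^{-s}) \sim \sigma(A)\,|\xi|^{-qs} + (\text{corrections})$, where essentially every correction term carries an explicit factor of $s$, because at $s=0$ the operator $Q^{-s}$ reduces to the identity, whose symbol is constant in $x$ and has no lower-order homogeneous components. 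The meromorphic continuation in $s$ of the $\xi$-integral then has its pole at $s=0$ produced solely by the component of $\sigma(A)$ homogeneous of degree $-1$: for $\xi>0$ one has $\sigma_{-1}(x,\xi)|\xi|^{-qs} = \sigma_{-1}(x,1)\,\xi^{-1-qs}$ and $\int_1^{\infty}\xi^{-1-qs}\,d\xi = 1/(qs)$, with the symmetric contribution coming from $\xi<0$. Collecting these and taking the matrix trace yields $\mathrm{Res}_{s=0}\,\zeta(A,Q,s) = \frac{1}{2\pi q}\int_{S^1}\sum_{\xi=\pm1}\tr\,\sigma_{-1}(x,\xi)\,dx$, and multiplying by $q$ gives the claimed formula. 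The main obstacle here is the bookkeeping that shows the correction terms contribute nothing to the residue at $s=0$: one must verify that their $s$-prefactor indeed kills the simple poles in $s$ that they would otherwise generate.

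For (i), with the local formula in hand it suffices to prove that $\int_{S^1}\sum_{\xi=\pm1}\tr\,\sigma_{-1}([A,B])\,dx = 0$. The matrix trace annihilates the leading order-$0$ term $[\sigma(A),\sigma(B)]$ of the commutator symbol, so only the star-product corrections survive. I would expand $\sigma([A,B]) = \sigma(AB)-\sigma(BA) \sim \sum_{k\geq1}\frac{(-i)^k}{k!}\bigl(\partial_\xi^k\sigma_A\,\partial_x^k\sigma_B - \partial_\xi^k\sigma_B\,\partial_x^k\sigma_A\bigr)$ and extract its degree-$(-1)$ part. The key structural fact, already visible in the $k=1$ Poisson-bracket term through the identity $\{f,g\} = \partial_\xi(f\partial_x g) - \partial_x(f\partial_\xi g)$, is that each contribution to the residue density is, after integrating by parts in $x$, a sum of a total $x$-derivative and a $\xi$-derivative of a function homogeneous of degree $0$. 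The former integrates to zero over the closed manifold $S^1$; the latter vanishes identically on each of the rays $\xi>0$ and $\xi<0$, since a function homogeneous of degree $0$ is constant in $\xi$ along each ray and hence has vanishing $\xi$-derivative there. This is the point at which dimension one does the real work. I expect the main obstacle to be the uniform treatment of the general-$k$ terms: showing that after the repeated integrations by parts in $x$ every surviving summand is indeed of one of these two harmless types. The $k=1$ case is a clean model, but organizing the higher-order terms systematically so that all residual pieces fall into the $x$-exact or $\xi$-locally-constant categories is the delicate step.
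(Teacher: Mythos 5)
The paper does not prove this proposition at all: it is recalled as a classical result of Wodzicki, with pointers to \cite{W} and \cite{Ka} (and the surrounding text essentially takes the residue of $\zeta(A,Q,\cdot)$ at $s=0$ as the defining property of $\res_W$). So there is no in-paper argument to compare yours against, and your proposal should be judged on its own merits. On those merits it is sound and is the standard derivation. For (ii), extracting the pole of $\tr(AQ^{-s})$ from the diagonal of the kernel, isolating the degree-$(-1-qs)$ component, and observing that all correction terms involve non-leading (or $x$-differentiated leading) components of $\sigma(Q^{-s})$ that vanish at $s=0$ --- because $Q^{0}=Id$ --- is exactly the right mechanism; the point you flag, that a holomorphic factor vanishing at $s=0$ kills a simple pole, is the whole content of the $Q$-independence. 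For (i), your reduction to the star-product expansion is correct, and the key dimension-one observation --- that a $\xi$-derivative of a degree-$0$ homogeneous function vanishes identically on each ray, so the cosphere ``integral'' $\sum_{\xi=\pm1}$ sees nothing --- is precisely what makes the one-dimensional case clean. Two small points deserve explicit care in a full write-up: for matrix-valued symbols the cyclicity of the fibrewise trace must be invoked alongside the integrations by parts in $x$ before the higher-$k$ terms organize into $x$-exact plus $\xi$-exact pieces (your $k=1$ identity is stated for the scalar Poisson bracket); and the interchange of the meromorphic continuation with the symbol expansion requires the usual remainder estimate showing that the non-classical tail of $\sigma(AQ^{-s})$ is integrable near $s=0$ and contributes no pole. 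Neither is a gap in the idea, only in the bookkeeping you already acknowledge.
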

	
	Since $\tr^Q$ is a linear extension of the classical trace $\tr$ of trace-class operators acting on $L^2(S^,V),$ it has weaker properties. Let us summarize some of them which are of interest for our work following first \cite{CDMP}, completed by \cite{Ma2016} for the third point.
	
	\begin{Proposition} \label{p6}

		\begin{itemize}
			
			\item   Given two (classical) pseudo-differential operators A and B,
			given a weight Q,
			\begin{equation}\label{crochet}  \tr^Q[A,B] = -{1 \over q} \res (A[B,\log Q]). \end{equation}
			

			\item Under the previous notations, if C is a classical elliptic
			injective operator or a diffeomorphism, $tr^{C^{-1}QC}\left( C^{-1}AC
			\right)$ is well-defined and equals $\tr^QA$. 
			
		\end{itemize}
		
	\end{Proposition}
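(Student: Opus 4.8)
The plan is to handle the two assertions separately, in each case reducing everything to the honest operator trace for $\mathrm{Re}(s)$ large, where cyclicity is available, and then passing to the finite part at $s=0$. For the commutator formula \eqref{crochet} I would start from the definition, $\tr^Q[A,B] = \mathrm{fp}_{s=0}\,\tr\!\left([A,B]Q^{-s}\right)$. Since $A,B$ are classical, for $\mathrm{Re}(s)$ large all the products below are trace class and cyclicity of the genuine trace gives $\tr([A,B]Q^{-s}) = \tr\!\left(A[B,Q^{-s}]\right)$; as these agree for large $\mathrm{Re}(s)$ they agree as meromorphic functions, so the finite parts at $s=0$ coincide. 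The algebraic heart of the argument is the exact interpolation identity
\[ [B,Q^{-s}] = -s\int_0^1 Q^{-us}\,[B,\log Q]\,Q^{-(1-u)s}\,du, \]
obtained by writing $Q^{-s}=e^{-s\log Q}$ and integrating $\frac{d}{du}\bigl(Q^{-us}BQ^{-(1-u)s}\bigr)=-s\,Q^{-us}[B,\log Q]Q^{-(1-u)s}$ from $u=0$ to $u=1$. The decisive structural fact here is that $R:=[B,\log Q]$ is again a \emph{classical} operator, of order $\le \mathrm{ord}(B)-1$, the logarithmic terms of $\log Q$ cancelling in the commutator.

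Writing $\tr\!\left(A[B,Q^{-s}]\right) = -s\int_0^1 \Phi_u(s)\,du$ with $\Phi_u(s) = \tr\!\left(AQ^{-us}RQ^{-(1-u)s}\right)$, I note that each $\Phi_u$ is the trace of a holomorphic family of classical operators with at most a simple pole at $s=0$; hence $\mathrm{fp}_{s=0}$ of the prefactored expression equals $-\mathrm{Res}_{s=0}\int_0^1\Phi_u(s)\,du$. To evaluate this residue I would show it is independent of $u$ by differentiating: $\frac{d}{du}\Phi_u(s) = s\,\tr\!\left(AQ^{-us}[R,\log Q]Q^{-(1-u)s}\right)$, where $[R,\log Q]$ is again classical, so this quantity is $s$ times a function with at most a simple pole, forcing $\mathrm{Res}_{s=0}\frac{d}{du}\Phi_u(s)=0$. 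Thus $\mathrm{Res}_{s=0}\Phi_u(s)$ is constant in $u$ and may be computed at $u=0$, where $\Phi_0(s)=\tr(ARQ^{-s})=\zeta(AR,Q,s)$ has residue $\frac{1}{q}\res_W(AR)$ by the pole formula recorded before Definition~\ref{d6}. This yields $\tr^Q[A,B] = -\frac{1}{q}\res\!\left(A[B,\log Q]\right)$.

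For the covariance formula (second bullet) the plan is first to justify that $C^{-1}QC$ is an admissible weight: conjugation preserves order, ellipticity and the (discrete, positive) spectrum, and the resolvent identity $(C^{-1}QC-\lambda)^{-1}=C^{-1}(Q-\lambda)^{-1}C$ inserted into the contour-integral definition gives $(C^{-1}QC)^{-s}=C^{-1}Q^{-s}C$. This is exactly what makes $\tr^{C^{-1}QC}(C^{-1}AC)$ well-defined, even though $C^{-1}QC$ need not be self-adjoint. Then, for $\mathrm{Re}(s)$ large, cyclicity of the honest trace gives $\tr\!\left((C^{-1}AC)(C^{-1}QC)^{-s}\right)=\tr\!\left(C^{-1}AQ^{-s}C\right)=\tr\!\left(AQ^{-s}\right)$, so $\zeta(C^{-1}AC,C^{-1}QC,s)=\zeta(A,Q,s)$ identically; taking finite parts at $s=0$ yields $\tr^{C^{-1}QC}(C^{-1}AC)=\tr^QA$. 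As a consistency check, $\res_W(C^{-1}AC)=\res_W(A)$ because $\res_W$ is a trace (Proposition~\ref{p5}) and the orders of $Q$ and $C^{-1}QC$ agree, so the singular terms subtracted in Definition~\ref{d6} also match.

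The main obstacle is analytic rather than algebraic, and it lives entirely in the first part: one must justify interchanging the honest trace with the $u$-integral and, above all, establish that each $\zeta$-type function appearing has at most a simple pole at $s=0$, which is what legitimises the identity $\mathrm{fp}_{s=0}\bigl(s\,G(s)\bigr)=\mathrm{Res}_{s=0}G(s)$. Both points rest on the single structural input that $[\,\cdot\,,\log Q]$ sends classical operators to classical operators; this is precisely what removes the logarithmic terms and keeps the whole computation inside the classical calculus where the pole structure of $\zeta(\cdot,Q,\cdot)$ and the locality of $\res_W$ (Proposition~\ref{p5}) are available. For the second part the only genuine subtlety is the preliminary verification that $C^{-1}QC$ admits complex powers given by $C^{-1}Q^{-s}C$, after which the equality is purely formal.
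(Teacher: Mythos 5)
The paper itself gives no proof of this proposition: it is quoted from \cite{CDMP} (completed by \cite{Ma2016}), and your argument is essentially the standard proof found in those references — the Duhamel-type interpolation formula for $[B,Q^{-s}]$, the structural fact that $[\,\cdot\,,\log Q]$ maps classical operators to classical operators of order one less, the $u$-independence of the residue, and, for the second bullet, conjugation of the complex powers via the resolvent identity. Your conclusion and the sign in \eqref{crochet} are correct, but note one internal inconsistency: the derivative is
$\frac{d}{du}\bigl(Q^{-us}BQ^{-(1-u)s}\bigr)=+s\,Q^{-us}[B,\log Q]\,Q^{-(1-u)s}$,
which upon integration gives $[Q^{-s},B]=s\int_0^1 Q^{-us}[B,\log Q]Q^{-(1-u)s}\,du$, i.e.\ exactly the displayed identity $[B,Q^{-s}]=-s\int_0^1\cdots$; the minus sign you wrote in the derivative contradicts the integral formula you deduce from it, so one of the two has a typo even though the end result is right. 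The analytic points you single out — trace-class justification of the cyclic permutations for $\mathrm{Re}(s)$ large, uniformity in $u$ when exchanging trace, integral and residue, and the simple-pole structure of $\zeta(\cdot,Q,\cdot)$ on classical operators — are indeed where the substance lies; they are exactly what the cited references supply, and acknowledging them explicitly rather than glossing over them is the correct way to present this as a proof sketch.
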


	Since $\tr^Q$ is not tracial, let us give one more property on the renormalized trace of the bracket, from e.g. \cite{Ma2020}.

	\begin{Proposition} \label{brackclass}  $$\forall (A,B) \in PDO^{-\infty}(S^1,V) \times Cl(S^1,V), \quad \tr^Q[A,B]= \tr[A,B]=0.$$
	\end{Proposition}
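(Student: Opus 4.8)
The plan is to reduce the whole statement to the ideal property of $PDO^{-\infty}(S^1,V)$ together with the two facts that a smoothing operator on the closed manifold $S^1$ is trace class and that $\tr^Q$ agrees with $\tr$ on trace-class operators. First I would observe that, since $A \in PDO^{-\infty}(S^1,V)$ and this space is a two-sided ideal in $Cl(S^1,V)$, both $AB$ and $BA$ lie in $PDO^{-\infty}(S^1,V)$, so that $[A,B] = AB - BA$ is again smoothing. A smoothing operator has a kernel in $C^\infty(S^1\times S^1, M_d(\C))$, hence is trace class on $L^2(S^1,V)$; thus $[A,B]$ is trace class and, by the remark following Definition \ref{d6}, $\tr^Q[A,B] = \tr[A,B]$. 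This already collapses the renormalized trace onto the genuine operator trace, so the statement reduces to proving that this common value is $0$.

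For the vanishing, the quickest route uses Proposition \ref{p6}. Since $A$, being of order $-\infty$, is in particular classical, and $B$ is classical, formula \eqref{crochet} applies and gives $\tr^Q[A,B] = -\frac{1}{q}\,\res(A[B,\log Q])$. The operator $[B,\log Q]$ is a well-defined pseudodifferential operator, and multiplying it on the left by the smoothing operator $A$ keeps the product inside the ideal $PDO^{-\infty}(S^1,V)$. By the local formula of Proposition \ref{p5}(ii), the Wodzicki residue only sees the $(-1)$-homogeneous component $\sigma_{-1}$ of the symbol, which vanishes identically for an operator of order $-\infty$; hence $\res(A[B,\log Q]) = 0$ and $\tr^Q[A,B] = 0$. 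Combined with the first paragraph this yields $\tr[A,B] = \tr^Q[A,B] = 0$, as desired.

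As a more elementary and self-contained alternative, I would instead prove $\tr[A,B] = 0$ directly from cyclicity of the operator trace on the ideal of smoothing operators. Writing $K_A \in C^\infty(S^1\times S^1)$ for the kernel of $A$, the operators $AB$ and $BA$ have smooth kernels obtained by letting $B$ (resp. its formal transpose) act on a smooth slot of $K_A$, and $\tr(AB) - \tr(BA)$ appears as the integral over $S^1$ of a total-derivative expression built from $x \mapsto K_A(x,x)$, which vanishes because $S^1$ is closed and there is no boundary term. This is exactly the smoothing-operator incarnation of the identity $\tr(AB) = \tr(BA)$.

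The main obstacle is purely the justification of cyclicity $\tr(AB) = \tr(BA)$ when $B$ may be unbounded. This is where the ideal structure is essential: it guarantees that $AB$ and $BA$ are trace class \emph{before} any rearrangement, so the equality is legitimate rather than a formal manipulation of non-trace-class quantities. The route through \eqref{crochet} sidesteps the issue entirely, since it encodes the potential defect of cyclicity as a Wodzicki residue that is forced to vanish on operators of order $-\infty$. Everything else is routine bookkeeping.
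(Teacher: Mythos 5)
Your argument is correct, but it takes a genuinely different route from the paper. The paper states Proposition \ref{brackclass} without proof, citing \cite{Ma2020}; the in-text argument it does supply is for the companion Proposition \ref{ncltrace}, which extends the identity $\tr[A,B]=0$ to all of $PDO(S^1,V)$, and that argument is purely operator-theoretic: one inserts $(1+|D|)^{o+2}(1+|D|)^{-o-2}$ next to $B$ so that every factor being cyclically permuted has order at most $-2$ and is therefore trace class, after which genuine cyclicity of $\tr$ applies; the observation that $\tr^Q=\tr$ on trace-class operators (your first paragraph, which is exactly right) then transfers the conclusion to the renormalized trace. Your main route instead passes through the tracial-anomaly formula \eqref{crochet} and the vanishing of the Wodzicki residue on operators of order $-\infty$. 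Both are legitimate. The residue argument is shorter for the statement at hand, but it is intrinsically restricted to classical $B$, since \eqref{crochet} requires the symbolic calculus and the bracket with $\log Q$; the paper's conjugation trick makes no classicality assumption on $B$, which is precisely what permits the generalization to Proposition \ref{ncltrace}. Your third, kernel-based paragraph is only a sketch: making the claimed total-derivative expression precise for a general classical $B$, whose Schwartz kernel is singular on the diagonal, would demand essentially the same care as the other two arguments; but since it is offered only as an alternative, it does not affect the validity of the proof.
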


Moreover, we can push further the property on $PDO(S^1,V):$

\begin{Proposition} \label{ncltrace}  $$\forall (A,B) \in PDO^{-\infty}(S^1,V) \times PDO(S^1,V), \quad  \tr[A,B]=0.$$
\end{Proposition}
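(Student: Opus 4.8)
The plan is to reduce the statement to the cyclicity of the operator trace on a suitable pair of smoothing operators. First I would record that $PDO^{-\infty}(S^1,V)$ is a two-sided ideal in $PDO(S^1,V)$: hence for $A \in PDO^{-\infty}(S^1,V)$ and $B \in PDO(S^1,V)$ both products $AB$ and $BA$ lie in $PDO^{-\infty}(S^1,V)$. Since smoothing operators on the compact manifold $S^1$ are trace-class, their Schwartz kernels being smooth, hence square-integrable and continuous on the diagonal, each of $\tr(AB)$ and $\tr(BA)$ is a finite classical trace, and $\tr[A,B]=\tr(AB)-\tr(BA)$ is well defined. The whole problem is therefore to prove $\tr(AB)=\tr(BA)$, the subtle point being that $B$ may be unbounded, so the naive cyclicity $\tr(XY)=\tr(YX)$ for a trace-class $X$ and a bounded $Y$ does not apply verbatim.

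Next I would pass to Schwartz kernels. Write $k_A \in C^\infty(S^1\times S^1, M_n(\C))$ for the smooth kernel of $A$ and $k_B$ for the distributional, off-diagonally smooth kernel of $B$. Because $A$ regularizes, the compositions have smooth kernels, given formally by
\begin{equation*} k_{AB}(x,z)=\int_{S^1} k_A(x,y)\,k_B(y,z)\,dy, \qquad k_{BA}(x,z)=\int_{S^1} k_B(x,y)\,k_A(y,z)\,dy, \end{equation*}
where the $y$-integrations are the pairings of the smooth function $k_A$ with the distribution $k_B$. Using that the trace of a smoothing operator is the integral of the matrix trace of its kernel along the diagonal, I would then write
\begin{equation*} \tr(AB)=\int_{S^1}\int_{S^1}\mathrm{tr}\bigl(k_A(x,y)\,k_B(y,x)\bigr)\,dy\,dx, \qquad \tr(BA)=\int_{S^1}\int_{S^1}\mathrm{tr}\bigl(k_B(x,y)\,k_A(y,x)\bigr)\,dy\,dx, \end{equation*}
and conclude by relabelling $(x,y)\mapsto(y,x)$ in the second identity together with the cyclicity $\mathrm{tr}(k_B k_A)=\mathrm{tr}(k_A k_B)$ of the pointwise matrix trace.

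The step I expect to be the main obstacle is making the diagonal restriction and the interchange of the two integrations rigorous, since $k_B$ is only a distribution with a singularity on the diagonal $\{x=y\}$: the products $k_A(x,y)k_B(y,x)$ must be read as distributional pairings, and Fubini is not automatic. The cleanest way around this, which avoids distributions altogether, is to factor $A=A_1A_2$ with $A_1,A_2\in PDO^{-\infty}(S^1,V)$, or more modestly to write $A$ as a finite sum of such products, which suffices by linearity; this is possible because the smoothing operators on $S^1$ form an algebra with a factorization property (for instance by a mollification argument or by Cohen-type factorization). Then $A_2B$ and $BA_1$ are again smoothing, hence Hilbert--Schmidt, and applying the cyclicity of the trace for products of two Hilbert--Schmidt operators twice gives
\begin{equation*} \tr(AB)=\tr\bigl(A_1(A_2B)\bigr)=\tr\bigl((A_2B)A_1\bigr)=\tr\bigl(A_2(BA_1)\bigr)=\tr\bigl((BA_1)A_2\bigr)=\tr(BA), \end{equation*}
which is the desired identity. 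This second route is safest, as cyclicity for two Hilbert--Schmidt operators is elementary and entirely sidesteps the unboundedness of $B$; the kernel computation above then serves as a transparent heuristic for why the equality must hold.
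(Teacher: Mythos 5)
Your overall strategy is sound and reaches the right conclusion, but it takes a genuinely different route from the paper. The paper also reduces everything to cyclicity of the trace for a pair of trace-class operators, but its device is to insert $(1+|D|)^{o+2}(1+|D|)^{-o-2}=Id$ with the exponent tailored to $o=ord(B)$, so that $A(1+|D|)^{o+2}$ is still smoothing while $(1+|D|)^{-o-2}B$ has order at most $-2$ and is therefore trace class on the one-dimensional manifold $S^1$; two applications of cyclicity then give $\tr(AB)=\tr(BA)$. Your device instead is to factor $A=A_1A_2$ inside the smoothing ideal and to shuttle $B$ around using Hilbert--Schmidt cyclicity. What your approach buys is that it never mentions the order of $B$ and would work verbatim in higher dimensions; what the paper's approach buys is that it needs no structural fact about the ideal $PDO^{-\infty}$ beyond its being a two-sided ideal.

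The one load-bearing step you leave unjustified is the factorization $A=A_1A_2$ (or $A=\sum A_1^{(i)}A_2^{(i)}$) with both factors smoothing, and the two justifications you gesture at do not quite deliver it: mollification produces approximations of $A$ by products, not a factorization, and Cohen's factorization theorem is stated for Banach algebras with a bounded approximate identity, whereas $PDO^{-\infty}(S^1,V)$ is a Fr\'echet algebra. The claim is nevertheless true and can be proved by hand: identifying a smoothing operator with its rapidly decreasing matrix $(a_{mn})$ in the Fourier basis, one chooses a rapidly decreasing sequence $\mu_n>0$ dominating, up to constants, all the weighted suprema $\sup_m |a_{mn}|(1+|m|+|n|)^k$, and sets $A_2=\mathrm{diag}(\mu_n)$, $A_1=AA_2^{-1}$; both are then smoothing. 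Alternatively, you can simply borrow the paper's trick and take $A_1=A(1+|D|)^{N}$, $A_2=(1+|D|)^{-N}$ with $N$ large depending on $ord(B)$, which makes $A_1$ smoothing and $A_2$, $A_2B$ Hilbert--Schmidt and lets your double-cyclicity chain go through unchanged; at that point your proof and the paper's essentially coincide. Your kernel computation is a good heuristic but, as you yourself note, should not be relied on, since restricting the distributional kernel of $B$ to the diagonal is exactly the kind of step that fails without the regularizing factor in place.
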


\begin{proof} Let $(A,B) \in PDO^{-\infty}(S^1,V) \times PDO(S^1,V).$
	Let $o = ord(B).$ Then $(1+|D|)^{o+2}$ is a pseudodifferential operator of order $o+2$ with inverse $(1+|D|)^{-o-2}.$
	Since $A$ is smoothing, $[A,B]$ is also smoothing and hence trace class. Therefore, the following expression makes sense and we can compute (since we are commuting operators of order -2 at most, which are trace class anyway): 
	\begin{eqnarray*}
		\tr(AB) & = & \tr(A (1+|D|)^{o+2} (1+|D|)^{-o-2} B ) \\ & = & \tr((1+|D|)^{-o-2} B A (1+|D|)^{o+2}  )
		\\ & = & \tr( B A (1+|D|)^{o+2} (1+|D|)^{-o-2} )
		\\ & = & \tr(BA).
	\end{eqnarray*}
\end{proof}
\section{A family of cocycles}
{{}
We consider the Lie algebra cohomology of $PDO(S^1,V)$ and of its Lie subalgebras.	
}
\begin{Definition}
	We define on $PDO(S^1,V)$  $$\theta: (a,b) \in PDO(S^1,V) \mapsto \theta_a (b)= a \circ p_{E_+} \circ b.$$  
	\end{Definition}
{{} In order to understand better our construction, we have to precise that the reader has to understand $\theta$ as a 1-form
	$$ a \mapsto \theta_a = (a \circ p_{E_+}) \circ (.)$$ with values in linear maps on $PDO(S^1,V),$ in order to understand better the link of this construction with the theory of connection 1-forms. This linear map is only a composition operator, in the spirit of the connection 1-forms described in \cite{Ma2020}. 
\begin{rem} On formal symbols, $$\sigma\left(\theta(a)\right) = \sigma(a)_+.$$\end{rem}

Therefore, we consider the curvature $\Omega$ of the connection $\theta,$ that is, 
$$\Omega(a,b) = \theta_a \theta_b - \theta_b \theta_a - \theta_{[a,b]} $$
which is, like $\theta_a,$ is an operator of coomposition on the left, i.e. $$\Omega(a,b)= s(a,b) \circ (.).$$
} 

\begin{Proposition}
	 $\Omega$ is a $PDO^{-\infty}(S^1,V)-$valued 2-form, in the sense that . $$\forall (a,b,c)\in PDO(S^1,V)^3, \quad \Omega(a,b)c = s c$$ where $s(a,b) \in PDO^{-\infty}(S^1,V).$
\end{Proposition}

\begin{proof} Let $(a,b,c)\in PDO(S^1,V)^3.$ From Proposition \ref{bracksmooth}, $$a p_{E_+} b p_{E_+} - b p_{E_+} a p_{E_+} = [a,b] p_{E_+} + s(a,b)$$
	where $$s(a,b) = a [p_{E_+},b]p_{E_+} -  b [p_{E_+},a]p_{E_+}$$ is a smoothing operator. Therefore, 
	\begin{eqnarray*}
		\Omega(a,b)c & = &  (\theta_a \theta_b - \theta_b \theta_a - \theta_{[a,b]})c\\
		& = & (a p_{E_+} b p_{E_+} - b p_{E_+} a p_{E_+}c - [a,b] p_{E_+})c \\
		& = & s(a,b) c
	\end{eqnarray*}
	\end{proof}
{{} We now consider 
$$\Omega^k  (a_1,...,a_{2k}) =  \Omega \wedge ... \wedge \Omega(a_1,...,a_{2k}) = s^k(a_1,...,a_{2k})\circ (.).$$
with
$$s^k(a_1,...,a_{2k}) = \sum_{\sigma \in \mathfrak{G}_{2k}} \frac{(-1)^{\epsilon(\sigma)}}{2k !}  \prod_{i = 1}^k s(a_{\sigma(2i-1)},a_{\sigma(2i)})$$
Since $s$ is with values in smoothing operators, so is the $2k-$form $s^k.$ By slight abuse of notations, we define the $2k-$form with values in $\C:$ 
\begin{equation} \label{trace} tr(\Omega^k) = tr(s^k) \end{equation}
where $tr$ is the trace of trace-class operators on $L^2(S^1,V).$
Since $\Omega(a,b) = s(a,b) \circ (.)$ is the composition on the left by a smoothing operator, we will now understand $tr(\Omega^k),$ and the similar expressions, along the lines of (\ref{trace}).}

This is the main property to get the following theorem

\begin{Theorem}
	The Chern-Weil like forms $$\tr (\Omega^k)$$ define closed $2k-$cocycles in Lie algebra cohomology of $PDO(S^1,V).$   
	\end{Theorem}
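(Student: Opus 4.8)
The plan is to follow the classical Chern--Weil argument, carried out purely algebraically on the Lie algebra $PDO(S^1,V)$, the only analytic input being the cyclicity of the operator trace. First I would record that $\tr(\Omega^k)$ is genuinely well-defined: since $\Omega$ takes values in $PDO^{-\infty}(S^1,V)$ by the previous Proposition, any product $\Omega(a_1,a_2)\cdots\Omega(a_{2k-1},a_{2k})$ is a composition of smoothing, hence trace-class, operators, so $\tr$ here is the honest $L^2$-trace and no $\zeta$-renormalization is needed at this stage. This is exactly the point emphasized in the introduction: the construction is arranged so that the classical trace applies directly.

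Next I would establish the algebraic Bianchi identity. Writing $\nabla$ for the covariant differential induced by $\theta$, I would check directly from $\Omega(a,b)=\theta_a\theta_b-\theta_b\theta_a-\theta_{[a,b]}$ and the Jacobi identity in $PDO(S^1,V)$ that the totally antisymmetrized covariant derivative of $\Omega$ vanishes, $\nabla\Omega=0$. This is the standard manipulation and, crucially, it is an identity of operators valid \emph{before} any trace is taken, so the unboundedness of $\theta$ plays no role in it.

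Then I would compute the coboundary of the $2k$-cochain $\psi_k(a_1,\ldots,a_{2k})=\tr\bigl(\Omega(a_1,a_2)\cdots\Omega(a_{2k-1},a_{2k})\bigr)$ (suitably antisymmetrized). By the Leibniz rule for $\nabla$ and the invariance of $\tr$, the coboundary collapses, as in the finite-dimensional Chern--Weil computation, into a sum of terms of the shape $\tr\bigl((\nabla\Omega)\,\Omega^{k-1}\bigr)$ together with terms of the shape $\tr\bigl([\theta_a,\Omega^k]\bigr)$. The first family is killed by the Bianchi identity. For the second, I would invoke cyclicity: since $\Omega^k$ is smoothing, $[\theta_a,\Omega^k]$ is smoothing, and Proposition \ref{ncltrace} (via Proposition \ref{bracksmooth}) gives $\tr[\theta_a,\Omega^k]=0$. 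Hence the coboundary of $\psi_k$ vanishes and $\tr(\Omega^k)$ is a cocycle.

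The main obstacle is the legitimacy of cyclicity in the presence of the unbounded connection form $\theta_a=a\,p_{E_+}$. The whole construction is engineered precisely so that this is not an issue: every expression to which the trace is applied contains at least $k\geq 1$ smoothing factors coming from $\Omega$, so all products are trace-class and the commutator identities of Propositions \ref{ncltrace} and \ref{brackclass} apply without renormalization. The remaining care is bookkeeping --- tracking signs and the antisymmetrization so that the non-Bianchi terms genuinely assemble into traces of commutators --- which is routine once the two structural inputs (algebraic Bianchi and trace-class cyclicity) are in place.
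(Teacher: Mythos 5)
Your proposal follows essentially the same route as the paper: the paper's proof likewise reduces $d\,\tr(\Omega^k)$ via the algebraic Bianchi identity $d\Omega=-[\theta,\Omega]$ to $-\tr[\theta,\Omega^k]$, which vanishes by Proposition \ref{ncltrace} since $\Omega^k$ is smoothing. Your write-up is more explicit about well-definedness (trace-class products) and the Leibniz-rule bookkeeping, but the two structural inputs are exactly the ones the paper uses.
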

\begin{proof}
	We compute directly {{}the coboundary}
	\begin{eqnarray*} d tr \Omega^k & = & tr d\Omega^k \end{eqnarray*} 
	and reduce the computation to 
	$$ d \Omega = -[\theta,\Omega].$$
	Therefore, 
	\begin{eqnarray*} d tr \Omega^k & = & -tr [\theta,\Omega^k] \\
	& = & 0 \end{eqnarray*} applying Proposition \ref{ncltrace}.
	\end{proof}

This result remains valid for any Lie subalgebra $\mathcal{A} \subset PDO(S^1,V).$
Let us now give a key elementary lemma about non-exactness, already applied in \cite{Ma2008}:
\begin{Lemma} \label{nonvanish}
	Let $\mathcal{A}$ be a Lie subalgebra of $PDO(S^1,V).$ Let $c$ be a cocycle on $\mathcal{A}.$ Let $\mathcal{B}$ be a commutative Lie subalgebra of $\mathcal{A},$ i.e. $[\mathcal{B},\mathcal{B}]=\{0\}.$ If $c$ is non vanishing on $\mathcal{B},$ then $c$ is not exact.
\end{Lemma}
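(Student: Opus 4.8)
The plan is to argue by contraposition: I would assume that $c$ is exact, say $c = db$ for some cochain $b$ of degree one lower, and then show that this assumption already forces $c$ to vanish identically on every commutative subalgebra. Since this contradicts the standing hypothesis that $c$ does not vanish on $\mathcal{B}$, no such primitive $b$ can exist, and $c$ is not exact. The mechanism behind this is that the coboundary operator of the Chevalley--Eilenberg complex of a Lie algebra sends a $(p-1)$-cochain $b$ to a $p$-cochain each of whose defining summands carries a Lie bracket of two of its arguments.

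Concretely, writing $c$ as a $p$-linear cocycle (here $p=2k$) and recalling the standard coboundary formula with trivial coefficients,
$$ (db)(x_1,\dots,x_p) = \sum_{1 \le i < j \le p} (-1)^{i+j}\, b\bigl([x_i,x_j],x_1,\dots,\widehat{x_i},\dots,\widehat{x_j},\dots,x_p\bigr), $$
I would evaluate both sides on an arbitrary tuple $(x_1,\dots,x_p) \in \mathcal{B}^p$. Because $\mathcal{B}$ is a commutative Lie subalgebra, every bracket $[x_i,x_j]$ occurring here vanishes, so each summand is zero and $(db)(x_1,\dots,x_p)=0$. Hence $c\big|_{\mathcal{B}} = (db)\big|_{\mathcal{B}} = 0$, i.e.\ $c$ vanishes on $\mathcal{B}$, which is exactly the negation of the hypothesis. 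This completes the contrapositive.

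The computation itself is immediate; the only delicate points are bookkeeping ones, and they are where I would spend care. First, one must fix the cohomology theory: the Chern--Weil forms $\tr(\Omega^k)$ are totally antisymmetric multilinear functionals, so the natural notion of exactness for them is that of the Chevalley--Eilenberg (Lie algebra) coboundary, for which the displayed formula holds with only bracket terms and no module-action terms (the coefficients being the trivial module $\C$). This is the crucial structural feature, since the corresponding Hochschild coboundary would instead involve products $x_i x_{i+1}$, which do not vanish on a commutative subalgebra. Second, I would note that restricting the identity $c = db$ to arguments in $\mathcal{B}$ is legitimate precisely because $\mathcal{B}$ is a subalgebra: the brackets $[x_i,x_j]$ remain inside $\mathcal{B}$, so $b\big|_{\mathcal{B}}$ is a genuine cochain of the subcomplex and its coboundary there coincides with $c\big|_{\mathcal{B}}$. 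The anticipated \emph{main obstacle} is thus not analytic at all but purely a matter of pinning down the right complex; once $d$ is read as the Lie-algebra coboundary, the commutativity of $\mathcal{B}$ does all the work.
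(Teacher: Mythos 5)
Your proof is correct. The paper in fact offers no proof of this lemma at all --- it is stated as a ``key elementary lemma ... already applied in \cite{Ma2008}'' with no argument following --- and the contrapositive you give (every coboundary $db$ evaluated on a tuple from $\mathcal{B}$ is a sum of terms each containing a bracket $[x_i,x_j]=0$, hence vanishes) is exactly the standard reasoning the author is relying on. Your side remark is also well taken: the paper repeatedly says ``Hochschild cohomology'' while the vanishing mechanism you use only works for the Chevalley--Eilenberg (Lie algebra) coboundary with trivial coefficients, since the genuine Hochschild differential involves associative products $x_i x_{i+1}$ that do not vanish on a commutative Lie subalgebra; pinning the argument to the Lie-algebra complex, as you do, is the right reading of the paper's intent and is precisely the point where care is needed.
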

\section{$\tr \Omega$ is cohomologous to the Schwinger cocycle on $Cl(S^1,V).$}

\begin{Theorem}
	On $Cl(S^1,V), $ $\tr \Omega$ has the same cohomology class as $\frac{1}{2} c_s,$ where $c_s$ is the Schwinger cocycle. By the way it has non-trivial Lie algebra cohomology class on $PDO(S^1,V).$ 
\end{Theorem}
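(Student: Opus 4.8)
The plan is to prove the two assertions in turn: that $[\tr\Omega]=\tfrac12[c_s]$ on $Cl(S^1,V)$, and then, as a separate matter, that $[\tr\Omega]$ is non-trivial on $PDO(S^1,V)$.

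First I would reduce $\tr\Omega$ to a pure residue cocycle on $Cl(S^1,V)$. Starting from the identity established above, $\Omega(a,b)=a\,p_{E_+}b\,p_{E_+}-ab\,p_{E_+}-b\,p_{E_+}a\,p_{E_+}+ba\,p_{E_+}$, which is smoothing, I note that $\tr\Omega$ is a genuine trace and that it agrees with $\tr^Q$ by Definition \ref{d6}. Using linearity of $\tr^Q$ and regrouping the four terms, I would write $\tr\Omega(a,b)=\tr^Q[\,a\,p_{E_+},\,b\,p_{E_+}\,]-\tr^Q\!\big([a,b]\,p_{E_+}\big)$. The second summand is, up to sign, the coboundary of the $1$-cochain $\lambda(a)=\tr^Q(a\,p_{E_+})$, so modulo a coboundary one has $[\tr\Omega]=[R]$ with $R(a,b):=\tr^Q[\,a\,p_{E_+},\,b\,p_{E_+}\,]$; note that $R$ itself depends on $Q$ but its class does not, since $\tr\Omega$ is $Q$-independent and $\lambda$ absorbs the dependence.

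Next I would identify $R$ with half the Schwinger cocycle. Applying the bracket formula \eqref{crochet} of Proposition \ref{p6} gives $R(a,b)=-\tfrac1q\,\res\!\big(a\,p_{E_+}\,[\,b\,p_{E_+},\log Q\,]\big)$, a Wodzicki-residue expression. Using the local formula for $\res$ from Proposition \ref{p5}(ii) together with the symbol $\sigma(p_{E_+})=\tfrac12(\mathrm{Id}+\xi/|\xi|)$ of Lemma \ref{l1}, I would compute this density at $\xi=\pm1$ and compare it, term by term, with the explicit residue presentation of $c_s$ recorded in \cite{Ma2008}; the expected outcome is $R=\tfrac12 c_s$ modulo one further coboundary, which yields the equality of classes on $Cl(S^1,V)$. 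I expect this symbol-level matching --- pinning down the constant $\tfrac12$ and reconciling the sign conventions between the two definitions --- to be the main obstacle, everything before it being formal manipulation of $\tr^Q$.

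Finally, for non-triviality on $PDO(S^1,V)$ I would invoke Lemma \ref{nonvanish} with $\mathcal{B}$ the commutative subalgebra of scalar multiplication operators $C^\infty(S^1,\C\cdot\mathrm{Id})$. For $f=e^{-i\theta}$ and $g=e^{i\theta}$ one has $[f,g]=0$, hence $\Omega(f,g)=f[p_{E_+},g]\,p_{E_+}-g[p_{E_+},f]\,p_{E_+}$. A direct computation on Fourier modes shows that $[p_{E_+},g]$ and $[p_{E_+},f]$ are rank one, that $[p_{E_+},g]\,p_{E_+}=0$ (the projection kills the constant mode on which $[p_{E_+},g]$ acts) while $g[p_{E_+},f]\,p_{E_+}$ is minus the rank-one projection onto $e^{i\theta}$, so that $\tr\Omega(f,g)=1\neq0$. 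Since $\tr\Omega$ does not vanish on the commutative algebra $\mathcal{B}$, Lemma \ref{nonvanish} shows it is not a coboundary, i.e. its Hochschild class on $PDO(S^1,V)$ is non-trivial.
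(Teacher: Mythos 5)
Your treatment of the first assertion follows the paper's route exactly up to the residue formula: the same splitting $\tr\Omega(a,b)=\tr^{Q}[\,a\,p_{E_+},b\,p_{E_+}]-\tr^{Q}([a,b]\,p_{E_+})$, the same observation that the second term is the coboundary of $a\mapsto \tr^{Q}(a\,p_{E_+})$, and the same application of \eqref{crochet} to turn the first term into $-\tfrac1q\res\bigl(a\,p_{E_+}[\,b\,p_{E_+},\log Q]\bigr)$. The genuine gap is what comes next: the identification of this residue expression with $\tfrac12 c_s$ modulo a coboundary is exactly where the factor $\tfrac12$ and hence the content of the theorem lives, and you only announce that you ``expect'' the symbol-level matching to work out, without performing it. The paper does not perform it either, but it closes the step by a different move: it recognizes $-\tfrac{i}{2\pi}\res\bigl(\sigma_+(X)[\sigma_+(Y),\log\Delta]\bigr)$ as the pull-back by $p_+$ of the Kravchenko--Khesin--Radul cocycle and invokes \cite{Ma2006-2}, where that cocycle is shown to be cohomologous to $\tfrac12 c_s$ on $Cl(S^1,V)$. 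As written, your argument leaves the decisive comparison as an unexecuted computation, so the first assertion is not actually established.

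For the second assertion your argument is correct and genuinely different from the paper's. The paper deduces non-triviality on $PDO(S^1,V)$ by restriction: a coboundary on $PDO(S^1,V)$ would restrict to a coboundary on $Cl(S^1,V)$, contradicting the first part. You instead apply Lemma \ref{nonvanish} directly, computing on the commutative algebra of multiplication operators that $\Omega(z^{-1},z)$ is (plus) the rank-one projection onto the mode $z^{1}$, so $\tr\Omega(z^{-1},z)=d\neq 0$; this checks out against the explicit mode computation the paper carries out later for the $4$-cocycle. This route buys something real: it proves non-triviality on $PDO(S^1,V)$ (and on any Lie subalgebra containing $C^\infty(S^1,\C)\otimes Id_V$) without relying on the identification with $c_s$ or on the known non-triviality of the Schwinger cocycle, so the second assertion would survive even if the constant in the first part were off.
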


\begin{proof} First, let $(X,Y) \in Cl(S^1,V)^2.$
	We have that $$\tr \Omega (X,Y) = \tr^\Delta \left[\theta_X,\theta_Y\right] - \tr^\Delta \theta_{[X,Y]}.$$
	The term $\tr^\Delta\theta_{[X,Y]}$ is a coboundary. Let us calculate $\tr^\Delta \left[\theta_X,\theta_Y\right].$ For this, we remark that $\sigma(\theta_X) = \sigma_+(X)$ thus
	$$\tr^\Delta \left[\theta_X,\theta_Y\right] = -\frac{i}{2\pi} res \sigma_+(X)\left[\sigma_+(Y),log \Delta\right].$$
	The last term thus can be identified with the pull-back of the Kravchenko-Khesin-Radul cocycle on $Cl(S^1,V)$ so that it has the same cohomology class as the Schwinger cocycle following \cite{Ma2006-2}.  
	
	If $\tr \Omega$ was a coboundary on $PDO(S^1,V),$ it would be also a coboundary on $Cl(S^1,V).$ So that, $\tr \Omega$ has non-trivial Hochschild cohomology class on $PDO(S^1,V).$ 
	\end{proof}
We have here to remark that the full comparison of $\tr \Omega$ with $\frac{1}{2} c_s$ remains an open question, because the correspondence is only established on $Cl(S^1,V).$

\section{Conclusion}
The family of cocycles on $PDO(S^1,V)$ that we produced show that there can exist some $2k-$cocycles on $PDO(S^1,V).$ Beside the $2-$cocycle $\tr \Omega$ which is cohomologous to the well-known Schwinger cocycle on $Cl(S^1,V)$ we get the following open question: 
\vskip 12pt
\centerline{Prove that there exists other non-trivial cocycles in our family.}
\vskip 12pt

 More generally and algebraically, the full description of the Hochschild cohomoloy of various Lie subalgebras of $PDO(S^1,V)$ (especially those with some unbounded operators) needs to be investigated. From a geometric viewpoint, the meaning of the higher Chern-Weil forms that we describe here, intrinsically liked with the sign of the Dirac operator, carry interpretations that can be only heuristic since the classical differential geometry (with atlases) fail to apply, but the seem intrinsically linked with the integrable almost complex structure described in \cite{MRu2021-1} in the context of formal classical pseudo-differential operators.



\begin{thebibliography}{99}
	
	
	
	
	
	
	
	
	\bibitem{ARS1} Adams, M.; Ratiu, T.; Schmidt, R.; A Lie group structure for pseudodifferential operators;
	\textit{Math. Annalen} \textbf{273},   529-551 (1986).
	
	
	\bibitem{Adl} Adler, M.; On a trace functionnal for formal pseudo-differential
	operators and the symplectic structure of Korteweg-de Vries type equations
	\textit{Inventiones Math.} \textbf{50} 219-248 (1979)
	
	
	
	\bibitem{BGV} Berline, N.; Getzleer, E.; Vergne, M.; \textit{Heat Kernels and Dirac Operators} Springer (2004)
	
	\bibitem{CDMP} Cardona, A.; Ducourtioux, C.; Magnot, J-P.; Paycha, S.;
	Weighted traces on pseudo-differential operators and geometry on loop groups;
	\textit{Infin. Dimens.
		Anal. Quantum Probab. Relat. Top.} \textbf{5} no4 503-541 (2002)
	
	\bibitem{CDP} Cardona, A.; Ducourtioux, C.; Paycha, S.; From tracial anomalies to anomalies in quantum field theory \textit{Comm. Math. Phys.} \textbf{242} no 1-2 31-65 (2003)
	
	\bibitem{cen} Cederwall, M.; Ferretti, G.; Nilsson, B; Westerberg, A.; Schwinger terms and 
	cohomology of pseudo-differential operators \textit{Comm. Math. Phys.} \textbf{175}, 203-220 (1996)
	
	
	
	
	
	
	\bibitem{Freed1988} Freed, D.; The Geometry of loop groups \textit{J. Diff. Geome.} \textbf{28} 223-276 (1988)
	
	
	\bibitem{Gil} Gilkey, P;
	{\it Invariance theory, the heat equation and the Atiyah-Singer index theorem}
	Publish or Perish (1984)
	
	\bibitem{Horm} H\"ormander,L.; Fourier integral operators. I; \textit{Acta Mathematica} \textbf{127} 79-189 (1971)
	
	
	
	\bibitem{Ka} Kassel, Ch.;
	Le r\'esidu non commutatif (d'apr\`es M. Wodzicki)  S\'eminaire
	Bourbaki, Vol. 1988/89. \textit{Ast\'erisque} {\bf 177-178},
	Exp. No. 708, 199-229 (1989)
	
	\bibitem{KW} Khesin, B.; Wendt, R.; \textit{The Geometry of Infinite-Dimensional Groups} Springer Verlag (2009)
	
	\bibitem{KV1} Kontsevich, M.; Vishik, S.;
	{ Determinants of elliptic pseudo-differential operators} Max
	Plank Institut fur Mathematik, Bonn, Germany, preprint n. 94-30
	(1994)
	
	\bibitem{KV2}  Kontsevich, M.; Vishik, S.; Geometry of determinants of elliptic operators.
	Functional analysis on the eve of the 21st century, Vol. 1 (New Brunswick, NJ, 1993), 
	\textit{Progr. Math.} \textbf{131},173-197
	(1995)
	
	\bibitem{KK} Kravchenko, O.S.; Khesin, B.A.; A central extension of the algebra of pseudo-differential 
	symbols \textit{Funct. Anal. Appl.} \textbf{25} 152-154 (1991) 
	
	\bibitem{KM} Kriegl, A.; Michor,  P.W.; 
	\textit{The convenient setting for global analysis} (1997); 
	AMS Math. Surveys and Monographs \textbf{53}, AMS, Providence 
	
	
	
	
	
	\bibitem{Le} Lesch, M.; On the non commutative residue for pseudo-differential operators 
	with log-polyhomogeneous symbol \textit{Ann. Glob. Anal. Geom.} \textbf{17} 151-187 (1998)
	
	
	\bibitem{Ma2003}  Magnot, J-P.; The K\"ahler form on the loop group and the Radul cocycle on Pseudo-differential Operators; \textit{GROUP'24: Physical and Mathematical aspects of symmetries}, Proceedings of the 24th International Colloquium on Group Theorical Methods in Physics, Paris, France, 15-20 July 2002; Institut of Physic conferences Publishing \textbf{173}, 671-675, IOP Bristol and Philadelphia (2003) 
	
	\bibitem{Ma2006} Magnot, J-P.; Chern forms on mapping spaces,
	\textit{Acta Appl. Math.} \textbf{91}, no. 1, 67-95 (2006).
	
	\bibitem{Ma2006-2}  Magnot, J-P.; Renormalized traces and cocycles on the algebra 
	of $S^1$-pseudo-differential operators; \textit{Lett. Math. Phys.} \textbf{75} no2, 111-127 (2006)
	
	
	
	\bibitem{Ma2008} Magnot, J-P.; The Schwinger cocycle on algebras with unbounded operators. 
	\textit{Bull. Sci. Math.} \textbf{132}, no. 2, 112-127 (2008).
	\bibitem{Ma2013} Magnot, J-P.; Ambrose-Singer theorem on diffeological bundles and complete integrability
	of KP equations. {\em Int. J. Geom. Meth. Mod. Phys.} {\bf 10}, no 9 (2013) Article ID
	1350043.
	
	
	
	\bibitem{Ma2016} Magnot, J-P.; On $Diff(M)-$pseudodifferential operators and the geometry of non linear grassmannians. 
	\textit{Mathematics} \textbf{4}, 1; doi:10.3390/math4010001 (2016)
	
	
	
	\bibitem{MR2018} Magnot, J-P; Reyes E. G.; $Diff_+(S^1)-$pseudo-differential operators and the Kadomtsev-Petviashvili hierarchy \textit{Ann. Henri Poincar\'e}  {\bf 21}, No. 6, 1893-1945 (2020)
	
	\bibitem{Ma2020} Magnot, J-P.; On the geometry of $Diff(S^1)-$pseudodifferential operators based on renormalized traces.  \textit{Proceedings of the International Geometry Center} {\bf 14} No. 1, 19-48 (2021)
	\bibitem{MRu2021-1} Magnot, J-P.; Rubtsov, V.; On the Kadomtsev-Petviashvili hierarchy in an extended class of formal pseudo-differential operators  \textit{Theoret. Math. Phys.} {\bf 207}, No. 3, 799-826 (2021)
	
	\bibitem{Mick} Mickelsson, J.; Wodzicki residue and anomalies on current algebras \textit{ Integrable models and strings} A. Alekseev and al. eds. \textit{Lecture notes in Physics} \textbf{436}, Springer (1994)
	
	\bibitem{Neeb2007} Neeb, K-H.; Towards a Lie theory of locally convex groups \textit{Japanese J. Math.} (2006) \textbf{1}, 291-468
	
	
	
	\bibitem{Om} Omori, H.; {\it Infinite Dimensional Lie Groups} (1997) AMS Translations of Mathematical Monographs no {\bf 158}  Amer. Math. Soc., Providence, R.I. 
	
	
	\bibitem{PayBook} Paycha, S; 
	\textit{Regularised integrals, sums and traces. An analytic point of view.}
	University Lecture Series \textbf{59}, AMS (2012).
	
	\bibitem{PS} Pressley, A.; Segal, G.; {\it Loop Groups} Oxford Univ. Press (1988)
	
	\bibitem{Rad} O.A.Radul; {Lie albegras of differential operators, their central extensions, and W-algebras} \textit{Funct. Anal. Appl.} \textbf{25}, 25-39 (1991)
	
	\bibitem{See} Seeley, R.T.; {Complex powers of an elliptic operator}
	\textit{AMS Proc. Symp. Pure Math.} \textbf{10},  288-307 (1968)
	
	\bibitem{Sch}  Schwinger, J.; Field theory of commutators; 
	\textit{Phys. Rev. Lett.} \textbf{3}, 296-297 (1959)
	
	\bibitem{Scott} Scott, S.;
	\textit{Traces and determinants of pseudodifferential operators}; 
	OUP (2010)
	
	
	
	
	\bibitem{W} Wodzicki, M.; {Local invariants in spectral asymmetry}
	\textit{Inv. Math.} {\bf 75}, 143-178  (1984)
\end{thebibliography}
\end{document}